\numberwithin{equation}{section}
\newtheorem{theorem}{Theorem}[section]
\newtheorem{lemma}[theorem]{Lemma}
\newtheorem{proposition}[theorem]{Proposition}
\newtheorem{definition}[theorem]{Definition}
\newtheorem{corollary}[theorem]{Corollary}
\theoremstyle{definition}
\newtheorem{example}[theorem]{Example}
\newtheorem{remark}[theorem]{Remark}
\DeclarePairedDelimiter{\floor}{\lfloor}{\rfloor}
\newcommand{\be}{\begin{equation}}
\newcommand{\ee}{\end{equation}}
\newcommand{\bes}{\begin{equation*}}
\newcommand{\ees}{\end{equation*}}
\newcommand{\cH}{\mathcal{H}}
\newcommand{\cL}{\mathcal{L}}
\newcommand{\mb}[1]{\mathbb{#1}}
\newcommand{\bC}{\mathbb{C}}
\newcommand{\ol}{\overline}
\begin{document}

\title[Stable division and essential normality]{Stable division and essential normality: the non-homogeneous and quasi homogeneous cases}

\author{Shibananda Biswas}
\address{Department of Mathematics and Statistics, Indian Institute of Science Education and Research Kolkata, Mohanpur - 741246, West Bengal, India
}
\email{shibananda@gmail.com}

\author{Orr Moshe Shalit}
\address{Faculty of Mathematics\\
Technion - Israel Institute of Mathematics\\
Haifa\; 3200003\\
Israel}
\email{oshalit@tx.technion.ac.il}

\thanks{The work of S. Biswas is partially supported by Inspire Faculty Fellowship (IFA-11MA-06)
funded by DST, India.
The work of O.M. Shalit is partially supported by ISF Grant no. 474/12, and by
EU FP7/2007-2013 Grant no. 321749}
\subjclass[2010]{47A13, 47B32, 12Y05, 13P10}
\keywords{Essential normality, Hilbert submodules, quasi-homogeneous polynomials, Groebner basis}
%%%%%%%%%%%%%%%%%%%%%%%%
\begin{abstract}
Let $\cH_d^{(t)}$ ($t \geq -d$, $t>-3$) be the reproducing kernel Hilbert space on the unit ball $\mb{B}_d$ with kernel
\[
k(z,w) = \frac{1}{(1-\langle z, w \rangle)^{d+t+1}} .
\]
We prove that if an ideal $I \triangleleft \mb{C}[z_1, \ldots, z_d]$ (not necessarily homogeneous) has what we call the  {\em approximate stable division property}, then the closure of $I$ in $\cH_d^{(t)}$ is $p$-essentially normal for all $p>d$.
We then show that all quasi homogeneous ideals in two variables have the stable division property, and combine these two results to obtain a new proof of the fact that the closure of any quasi homogeneous ideal in $\mb{C}[x,y]$ is $p$-essentially normal for $p>2$.
\end{abstract}

\maketitle

%%%%%%%%%%%%%%%%%%%%%%%%%%%%%%%%%%%%%%%%%%%%%%%
\section{Introduction and preliminaries}
\subsection{The basic setting}

Let $\mb{C}[z] = \mb{C}[z_1, \ldots, z_d]$ denote the ring of complex polynomials in $d$ variables.
Every $d$-tuple $T = (T_1, \ldots, T_d)$ of commuting operators on a Hilbert space $H$ defines an action of $\mb{C}[z]$ on $H$ via
\[
p\cdot h = p(T) h \quad, \quad p \in \mb{C}[z], h \in H.
\]
This gives $H$ the structure of a module over $\mb{C}[z]$, and we say that $H$ is a {\em Hilbert module} over $\mb{C}[z]$. The Hilbert module $H$ is said to be {\em essentially normal} if the commutator $[T_i, T_j^*]:= T_i T_j^* - T_j^* T_i$ is compact for all $i,j$.
If $p \geq 1$ and $[T_i,T_j^*] \in \cL^p$ (meaning that $|[T_i, T_j^*]|^p$ is trace class), then we say that $H$ is {\em $p$-essentially normal}.

One of the interesting ways in which Hilbert modules arise is as follows. Assume that an inner product is given on $\mb{C}[z]$ such that the multiplication operators $S_1, \ldots, S_d$ given by
\[
(S_i f)(z) = z_i f(z)
\]
are bounded. Let $H$ denote the completion of $\mb{C}[z]$ with respect to the given inner product.
Then $H$ becomes a Hilbert module. Examples includes all analytic Hilbert modules on bounded domain in $\mathbb C^d$ \cite{CG}. In the case where (1) monomials of different degrees are orthogonal, and (2) the row operator $[S_1, \ldots, S_d] : H^{(d)} \rightarrow H$ has closed range, these types of modules were referred to as {\em graded completions} of $\mb{C}[z]$ in \cite{Arv07}.
Moreover, if $r \in \mb{N}$, then $H \otimes \mb{C}^r$ is also a Hilbert module in a natural way --- such modules were referred to as {\em standard Hilbert modules} in \cite{Arv07}. Finally, if $M$ is a closed submodule of $H \otimes \mb{C}^r$, then $M$ and $(H \otimes \bC^r)/M$ can also be given a natural Hilbert module structure.

%%%%%%%%%%%%%%%%%%%%%%%%%%%%%%%%%%%%%%%%%%%%%%%%%%%%%%%%%
\subsection{Homogeneous and quasi homogeneous submodules}

The algebra of polynomials has a natural grading by degree
\[
\mb{C}[z] = \mb{H}_0 \oplus \mb{H}_1 \oplus \mb{H}_2 \oplus \ldots.
\]
This induces a direct sum decomposition
\[
H = \mb{H}_0 \oplus \mb{H}_1 \oplus \mb{H}_2 \oplus \ldots .
\]
A vector valued polynomial $h \in H \otimes \mb{C}^r$ is said to be {\em homogeneous} if $h \in \mb{H}_n \otimes \mb{C}^r$ for some $n$. A module is said to be {\em homogeneous} if it is generated by homogeneous polynomials.

Let ${\bf n} = (n_1, \ldots, n_d) \in \mb{N}^d$, where $n_i > 0$ for all $i$. A vector valued polynomial $h \in \mb{C}[z] \otimes \mb{C}^r$ is said to be {\em ${\bf n}$-quasi homogeneous of degree $m$}, denoted $deg_{\bf n}(h) = m$, if for every monomial $z^\alpha \otimes v_\alpha$ appearing in $h$ it holds that $n_1 \alpha_1 + \ldots n_d \alpha_d = m$.
A module $M \subseteq \mb{C}[z] \otimes \mb{C}^r$ is said to be {\em ${\bf n}$-quasi homogeneous} if it is generated by ${\bf n}$-quasi homogeneous polynomials.

Any ${\bf n}$-quasi homogeneous submodule of a graded completion $H$ (including $H$ itself) also decomposes as a direct sum of ${\bf n}$-quasi homogeneous summands.

%%%%%%%%%%%%%%%%%%%%%%%%%%%%
\subsection{Stable division and approximate stable division}

%%%%%%%%%%%%%%%%%%%
\begin{definition}
A submodule $M \triangleleft \mb{C}[z] \otimes \mb{C}^r\subseteq H\otimes\mathbb C^r$ is said to have the {\em approximate stable division property} if there are elements $f_1, \ldots, f_k \in M$ and a constant $A$ such that for every $\epsilon > 0$ and for every $h \in M$, one can find polynomials $g_1, \ldots, g_k \in \mb{C}[z]$ such that
\be\label{eq:sum_app}
\|h - \sum_{i=1}^k g_i f_i\| \leq \epsilon ,
\ee
together with the norm constraint
\be\label{eq:stab_app}
\sum_{i=1}^k \|g_i f_i \| \leq A \|h\| .
\ee
If $\epsilon$ can be chosen $0$, then we say that $M$ has the {\em stable division property}. The set $\{f_1, \ldots, f_k\}$ is said to be an {\em (approximate) stable generating set}.
\end{definition}

%%%%%%%%%%%%%%%%%%%%%%%%%%
\subsection{The Hilbert spaces $\cH_d^{(t)}$}

In this paper we will let $\cH_d^{(t)}$ ($t \geq -d$) be the reproducing kernel Hilbert space on the unit ball $\mb{B}_d$ with kernel
\[
k(z,w) = \frac{1}{(1-\langle z, w \rangle)^{d+t+1}} .
\]
This is the completion of $\mb{C}[z]$ with respect to the inner product making all monomials orthogonal, and for which
\be\label{eq:tnorm}
\|z^\alpha\|_t^2 = \frac{\alpha!}{\Pi_{i=1}^{|\alpha|} (d+t+i)} .
\ee
When $t = 0$ and $t=-1$, respectively, we get the Bergman space and the Hardy space, respectively, on the unit ball (see \cite{ZhuBook}).
When $t = -d$ then we get the {\em Drury-Arveson space}, denoted $H^2_d$ (see \cite{Arv98, ShalitSurvey}).
Note that if $|\alpha| = n$, then
\be\label{eq:quotient_norm}
\frac{\|z^\alpha\|_t^2}{\|z^\alpha\|_{H^2_d}^2} = \frac{n!}{\Pi_{i=1}^{n} (d+t+i)} = \frac{\Gamma(n+1)\Gamma(d+t+1)}{\Gamma(d+t+n+1)} .
\ee
Thus for all $f \in \mb{H}_n$ we have
\[
\|f\|^2_t = c_{n,t}\|f\|^2_{H^2_d} ,
\]
where $c_{n,t}$ denotes the right hand side of (\ref{eq:quotient_norm}).
We see that the $c_{n,t}$ are less than one, decreasing, and tend to $0$ with $n\rightarrow \infty$ for all $t > -d$.
In fact, Arveson showed in the appendix of \cite{Arv07} that if $H$ is any standard Hilbert module which has {\em maximal symmetry} (in the sense that the inner product in $H$ is invariant under the action of the unitary group of $\mb{C}^d$), then any homogeneous polynomial $f \in \mb{H}_n$ has norm
\[
\|f\|^2_H = \gamma_n\|f\|^2_{H^2_d}.
\]
If $f = \sum_{i=k}^m f_i$ is a polynomial with homogeneous parts $f_k, \ldots, f_m$, then
\[
\|f\|_H^2 = \sum_{i=k}^m \|f_i\|_H^2 \leq \sum_{i=k}^m \gamma_i\|f_i\|_{H_d^2}^2 \leq \max_{k \leq i \leq m}\gamma_i \|f\|_{H^2_d}^2 .
\]
Likewise $\|f\|^2_H \geq \min_{k \leq i \leq m} \gamma_i \|f\|^2_{H^2_d}$.
For the spaces $\cH_d^{(t)}$ we get
\be\label{eq:equiv_norm}
c_{m,t}\|f\|^2_{H^2_d} \leq \|f\|^2_t \leq c_{k,t}\|f\|^2_{H^2_d} \leq \|f\|^2_{H^2_d}.
\ee
%%%%%%%%%%%%%%%%%%%%%%%%%%
\subsection{What this paper is about}

Arveson conjectured \cite{Arv02} that every quotient of $H^2_d \otimes \bC^r$ by a homogneous submodule $M$ is $p$-essentially normal for all $p>d$; later Douglas refined this conjecture \cite{Dou06b}   to include the range $p > \dim (M)$.
The Arveson-Douglas conjecture drew several mathematicians to work on essential normality (see, e.g., \cite{Arv05,Arv07,Dou06a,DS11,Esc11,GW08,Ken15,KenSha12,KenSha15,Sha11}), and positive results showing that the conjecture holds for certain classes of ideals were obtained.

With time it has come to be believed that the {\em homogeneity} assumption should not be of central importance.
Moreover, the conjecture is believed to hold for $\cH_d^{(t)}$ for all $t \geq -d$.
In fact, it is a folklore result that in the homogeneous setting the validity of the conjecture in one of these spaces is equivalent to its validity in all these spaces.
In two important papers \cite{DW11} and \cite{FX13} it was proved that the closure in some of the spaces $\cH_d^{(t)}$ of a principal ideal $I$ is $p$-essentially normal for all $p>d$ (see also \cite{FX15} and \cite{GZ13} which followed).
An even bigger breakthrough occurred with the appearance of the two papers \cite{DTY14} and \cite{EngEsc13}, in which the conjecture was confirmed under natural smoothness and transversality conditions.

In a different effort, the paper \cite{Sha11} suggested an approach that was based on stable division; in that paper it was shown that if a homogeneous submodule $M \triangleleft \bC[z] \otimes \bC^r \subset H^2_d \otimes \bC^r$ has the stable division property, then it fully satisfies the Arveson-Douglas conjecture.
This was used to obtain new proofs of the conjecture (and also to give some explanation of it) for classes of ideals for which a stable generating set was shown to exist (monomial ideals and ideals in two variables).

Our goal in this paper is two-fold.
First, we wish to show that the closure in $\cH_d^{(t)}$ of an ideal that has the stable division property is $p$-essentially normal for all $p>d$; in fact, only the approximate stable division property is required.
We only obtain this for $t>-3$. See Theorems \ref{thm:stab_EN} and \ref{thm:stab_EN2} for precise statements.
This improves on the result from \cite{Sha11} in that homogeneity is not required.
The key tools we use are the main results as well as some auxiliary results and techniques from \cite{FX13,FX15}, which say that principal ideals are $p$-essentially normal, together with the main result from \cite{Ken15}, which allows us to promote the result from principal ideals to general ideals.

Our second goal is to show that quasi homogeneous ideals in $\bC[x,y]$ have the stable division property with respect to any one of the $\cH_d^{(t)}$ norms; see  Theorem \ref{thm:stab_div_quasi}.
Theorems \ref{thm:stab_EN} (or \ref{thm:stab_EN2}) and \ref{thm:stab_div_quasi} combine to give a new proof that the closure of any quasi homogeneous ideal in $\cH_2^{(t)}$ is $p$-essentially normal for all $p>2$ and all $t \geq -2$.
This result was already obtained using different methods in \cite[Theorem 4.2]{DS11} and \cite[Corollary 1.3]{GZ13}.
In fact, using results from \cite{FX15}, one can easily get more --- we can show $p$-essential normality for every ideal in two variables; this is obtained in Theorem \ref{thm:two_dim}.
However, we think that the application of stable division in a new setting --- even to obtain a known result --- is an important development, and this urges us to continue to look for stable division (or lack of) in other classes of ideals.

%%%%%%%%%%%%%%%%%%%%%%%%%%%%%%%%%%%%%%%%%%%%%%%
\section{Stable division and essential normality in the Hilbert spaces $\cH_d^{(t)}$}

%%%%%%%%%%%%%
\begin{lemma}\label{lem:T}
Let $M_1, \ldots, M_k$ be linear subspaces of a Hilbert space $H$.
Let $M = M_1 + \ldots + M_k$ denote the algebraic sum of these spaces inside $H$, and let $\ol{M_1} \oplus \ldots \oplus \ol{M_k}$ denote the (disjoint) orthogonal sum formed by them.
Consider the map
\[
T : \ol{M_1} \oplus \ldots \oplus \ol{M_k} \longrightarrow H
\]
given by
\[
T(m_1, \ldots, m_k) = m_1 + \ldots + m_k.
\]
Then the following are equivalent:
\begin{enumerate}
\item T has closed range.
\item There exists a constant $C$ such that for every $h \in M$ and every $\epsilon > 0$, there are $m_1 \in M_1, \ldots,  m_k \in M_k$ satisfying
\be\label{eq:msum}
\|h - (m_1 + \ldots + m_k)\| \leq \epsilon ,
\ee
and
\be\label{eq:mnorm}
\sum \|m_i\|^2 \leq C \|h\|^2.
\ee
\end{enumerate}
In case that all the subspaces $M_i$ are closed, then (1) implies (2) with $\epsilon = 0$ as  well.
\end{lemma}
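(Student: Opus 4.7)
My plan is to interpret both conditions through the range $\operatorname{Ran}(T) = \overline{M_1} + \cdots + \overline{M_k}$, which is sandwiched by $M \subseteq \operatorname{Ran}(T) \subseteq \overline{M}$; consequently $T$ has closed range if and only if $\operatorname{Ran}(T) = \overline{M}$.

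For $(1)\Rightarrow(2)$, the tool is the open mapping theorem. Since $T$ has closed range, the induced map $(\ker T)^\perp \to \operatorname{Ran}(T)$ is a Banach space isomorphism, so there is a constant $C' > 0$ such that every $y \in \operatorname{Ran}(T)$ admits a preimage $(m_1,\ldots,m_k) \in \overline{M_1}\oplus\cdots\oplus\overline{M_k}$ with $\sum_i \|m_i\|^2 \leq C'\|y\|^2$. Apply this to $h \in M \subseteq \operatorname{Ran}(T)$. If each $M_i$ is closed then $m_i \in M_i$ and (2) holds with $\epsilon=0$. Otherwise, given $\epsilon > 0$, pick $m_i' \in M_i$ with $\|m_i - m_i'\| \leq \min(\epsilon/k,\|h\|/k)$, which gives $\|h - \sum_i m_i'\| \leq \epsilon$ together with the bound $\sum_i \|m_i'\|^2 \leq (2C' + 2/k)\|h\|^2$ via the elementary estimate $\|m_i'\|^2 \leq 2\|m_i\|^2 + 2\|h\|^2/k^2$.

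For $(2)\Rightarrow(1)$, I will show $\overline{M} \subseteq \operatorname{Ran}(T)$ by an iterative construction with geometric decay. Given $y \in \overline{M}$, set $y_1 := y$ and build $m_{i,n} \in M_i$ recursively: choose $h_n \in M$ with $\|y_n - h_n\| \leq \|y_n\|/4$, then apply hypothesis (2) to $h_n$ with tolerance $\|y_n\|/4$ to produce $m_{i,n} \in M_i$ such that $\|h_n - \sum_i m_{i,n}\| \leq \|y_n\|/4$ and $\sum_i \|m_{i,n}\|^2 \leq C(5/4)^2\|y_n\|^2$. Setting $y_{n+1} := y_n - \sum_i m_{i,n} \in \overline{M}$ then gives $\|y_{n+1}\| \leq \|y_n\|/2$, so $\|y_n\| \leq 2^{-(n-1)}\|y\|$ and $\|m_{i,n}\| \leq \sqrt{C}\,(5/4)\cdot 2^{-(n-1)}\|y\|$. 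Hence the series $m_i := \sum_n m_{i,n}$ converges in $\overline{M_i}$, and passing the identity $y_1 = y_{N+1} + \sum_{n=1}^N \sum_i m_{i,n}$ to the limit yields $y = \sum_i m_i \in \operatorname{Ran}(T)$.

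The main obstacle is the backward direction, where an exact decomposition must be extracted from merely approximate data. The device that makes this succeed is to let the approximation tolerance at each stage be proportional to the current residual, forcing geometric contraction of the residuals and absolute summability of the correction terms in each $\overline{M_i}$; this is the standard promotion of a dense image with norm control to outright surjectivity. The forward direction is by contrast a packaged application of the open mapping theorem together with a routine approximation of elements of closures by elements of the original subspaces.
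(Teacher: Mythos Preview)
Your proof is correct. The forward direction $(1)\Rightarrow(2)$ matches the paper's argument essentially verbatim: both invoke the bounded inverse/open mapping theorem for the induced bijection from $(\ker T)^\perp$ (equivalently, the quotient by $\ker T$) onto $\overline{M}$, and then approximate each $m_i\in\overline{M_i}$ by elements of $M_i$.

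The backward direction $(2)\Rightarrow(1)$ is where you and the paper part ways. The paper picks a sequence $h^{(n)}\to h$ in $M$, applies (2) with $\epsilon = 1/n$ to obtain uniformly bounded tuples $(m_1^{(n)},\ldots,m_k^{(n)})$, and then invokes weak compactness of the unit ball in $\overline{M_1}\oplus\cdots\oplus\overline{M_k}$ to pass to a weak limit $(m_1,\ldots,m_k)$; since $T$ is bounded and hence weak--weak continuous, $T(m_1,\ldots,m_k)=h$. Your argument instead runs a geometric-series iteration: at each stage you halve the residual $y_n$ and accumulate correction terms $m_{i,n}$ whose norms decay geometrically, so the series $\sum_n m_{i,n}$ converges absolutely in $\overline{M_i}$. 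Both routes establish $\overline{M}\subseteq\operatorname{Ran}(T)$. The paper's proof is shorter and exploits the Hilbert-space structure directly via weak compactness; your construction is more hands-on and would work verbatim in a Banach-space setting where weak compactness of balls may fail. Either is a standard device for upgrading ``approximate surjectivity with norm control'' to actual surjectivity.
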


\begin{proof}
Let $K$ be the kernel of $T$, and consider the induced map
\[
\tilde{T} : G:= (\ol{M_1} \oplus \ldots \oplus \ol{M_k})/K \longrightarrow H.
\]
If $T$ has closed range, then $\tilde{T}$ is a bounded linear bijection of $G$ onto the Hilbert space $\ol{M}$, hence $\tilde{T}^{-1}$ is bounded.
Say $\|\tilde{T}^{-1}\| \leq c$. Given $h \in M$, let $g = \tilde{T}^{-1}(h)$. Then $\|g\| \leq c \|h\|$, which means that there is an element $(m_1, \ldots, m_k)$ in $\ol{M_1} \oplus \ldots \oplus \ol{M_k}$ satisfying (\ref{eq:msum}) and (\ref{eq:mnorm}) with $\epsilon = 0$ and any $C > c^2$.
In the case where the subspaces $M_i$ are not closed we may replace this tuple by $(m_1, \ldots, m_k)$ in $M_1 \oplus \ldots \oplus M_k$ to obtain (\ref{eq:msum}) and (\ref{eq:mnorm}) with $\epsilon > 0$ arbitrarily small and some constant $C > c^2$.

Conversely, assume that condition (2) above holds, and let $h \in \ol{M}$. Let $\{h^{(n)}\}$ be a sequence in $M$ converging to $h$ with $\|h^{(n)}\| \leq \|h\|$ for all $n$. For all $n$, we find $(m^{(n)}_1, \ldots, m^{(n)}_k) \in M_1 \oplus \ldots \oplus M_k$ satisfying
\[
\|h^{(n)} - (m_1^{(n)} + \ldots + m^{(n)}_k)\| \leq 1/n
\]
and
\[
\sum \|m^{(n)}_i\|^2 \leq C \|h\|^2.
\]
By weak compactness of the unit ball we find $(m_1, \ldots, m_k)$ in $\ol{M_1} \oplus \ldots \oplus \ol{M_k}$ satisfying (\ref{eq:mnorm}) and (\ref{eq:msum}) with $\epsilon = 0$. In particular, the range of $T$ is equal to $\ol{M}$, and is therefore closed.
\end{proof}

\begin{remark}\label{rem:notsquare}
Note that, given $k$, condition \eqref{eq:mnorm} is equivalent to
\begin{eqnarray*}
\sum \|m_i\| \leq C' \|h\|.
\end{eqnarray*}
\end{remark}

The relevance of Lemma \ref{lem:T} to the problem of stable division is that $\{f_1, \ldots, f_k\}$ is an approximate stable generating set for $I$ if and only if the subspaces $M_i = \langle f_i \rangle$ satisfy condition (2) of the lemma.

%%%%%%%%%%%%%%%
\begin{theorem}\label{thm:stab_EN}
Let $t> -3$ and let $I$ be an ideal in $\mb{C}[z]$.
Assume that $I$ has the approximate stable division property with respect to the $\cH_d^{(t)}$ norm.
Then the submodule $\ol{I}$ of $\cH_d^{(t)}$ is $p$-essentially normal for all $p>d$.
\end{theorem}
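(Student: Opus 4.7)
The plan is to combine the approximate stable division hypothesis with Lemma \ref{lem:T} to write $\overline{I}$ as a \emph{closed} sum of closures of principal submodules, then apply two external inputs: the Fang--Xia results of \cite{FX13,FX15} which give $p$-essential normality of each principal piece in $\mathcal{H}_d^{(t)}$ (for $t > -3$ and $p > d$), and the main theorem of \cite{Ken15} which promotes $p$-essential normality from principal submodules to general (closed) sums of such.

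More concretely, let $\{f_1, \ldots, f_k\} \subset I$ be an approximate stable generating set with constant $A$, and set $M_i = \mathbb{C}[z] \cdot f_i$. Condition (\ref{eq:stab_app}), together with Remark \ref{rem:notsquare} applied with the fixed integer $k$, is exactly condition (2) of Lemma \ref{lem:T} for the subspaces $M_1, \ldots, M_k$ of $H = \mathcal{H}_d^{(t)}$. Hence the summation map $T \colon \overline{M_1} \oplus \cdots \oplus \overline{M_k} \to \mathcal{H}_d^{(t)}$ has closed range. Since $\sum M_i$ is dense in $I$, this range equals $\overline{I}$, and we obtain the key identity
\[
\overline{I} \;=\; \overline{M_1} + \overline{M_2} + \cdots + \overline{M_k},
\]
with the right-hand side a closed sum of submodules. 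This is the structural input that the stable division property is supposed to supply.

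For each $i$, $\overline{M_i}$ is the closure in $\mathcal{H}_d^{(t)}$ of the principal ideal $(f_i)$. The results of Fang and Xia (\cite{FX13} and its extension \cite{FX15}) apply in the range $t > -3$ to conclude that every such principal submodule is $p$-essentially normal for $p > d$. Finally, the main theorem of \cite{Ken15} allows one to glue: given submodules whose closed sum is again closed, if each is $p$-essentially normal then so is their sum. Applied inductively to the decomposition above, this yields that $\overline{I}$ is $p$-essentially normal for $p > d$, as claimed.

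The main obstacle is bookkeeping the hypotheses of Kennedy's theorem along the induction: at each step one must know that the partial sum $\overline{M_1} + \cdots + \overline{M_j}$ is itself closed (so that Kennedy can be applied to add the next summand $\overline{M_{j+1}}$). This does not follow automatically from closedness of the full sum, so one either verifies a version of Kennedy's result that handles all summands at once, or refines the conclusion of Lemma \ref{lem:T} to guarantee closedness of every partial sum (for instance by showing that $\{f_1,\ldots,f_j\}$ is itself an approximate stable generating set for $\overline{M_1 + \cdots + M_j}$, which is immediate from the definition). A secondary point to check is that the $t > -3$ hypothesis is used \emph{only} through the Fang--Xia input for principal ideals, so the proof goes through verbatim in any $\mathcal{H}_d^{(t)}$ in which principal-ideal essential normality is available.
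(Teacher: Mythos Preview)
Your proposal is correct and follows essentially the same route as the paper: use Lemma~\ref{lem:T} to write $\overline{I}$ as the closed algebraic sum of the closures of the principal ideals $\langle f_i\rangle$, invoke Fang--Xia \cite{FX13,FX15} for $p$-essential normality of each principal piece (this is where $t>-3$ enters), and then appeal to Kennedy \cite{Ken15} to conclude. Your worry about partial sums is unnecessary: the paper applies \cite[Theorem~3.3]{Ken15} directly to the full decomposition, since that result is phrased for \emph{$p$-essentially decomposable} submodules (i.e.\ closed algebraic sums of finitely many closed $p$-essentially normal submodules) and does not require an inductive step-by-step gluing; the paper also notes that although Kennedy's statement is for $H^2_d$, its proof rests on \cite[Theorem~4.4]{Arv07}, which is formulated for abstract $p$-essentially normal Hilbert modules and hence applies to $\cH_d^{(t)}$.
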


\begin{proof}
Let $\{f_1, \ldots, f_k\}$ be an approximate stable generating set for $I$, and for all $i=1,\ldots, k$ let $N_i$ denote the closure of the principal ideal $\langle f_i \rangle$.
By Theorem 1.1 in \cite{FX13} and Theorem 1.2 in \cite{FX15}, every one of the submodules $N_i \subseteq \cH_d^{(t)}$ is $p$-essentially normal for all $p>d$.

By Lemma \ref{lem:T} above, the algebraic sum $N_1 + \ldots + N_k$ is closed; it follows that it must be equal to the closure of $I$,
\[
\ol{I} = N_1 + \ldots + N_k.
\]
In the terminology of \cite{Ken15}, this means that $\ol{I}$ is $p$-essentially decomposable (meaning simply that $\ol{I}$ is the algebraic sum of closed $p$-essentially normal submodules of $\cH^{(t)}_d$).
By Theorem 3.3 of \cite{Ken15}, $\ol{I}$ is also $p$-essentially normal.

To be precise, \cite[Theorem 3.3]{Ken15} is stated for submodules of $H^2_d = \cH_d^{(-d)}$, but the proof of this result is a direct application of \cite[Theorem 4.4]{Arv07}, which is stated in the generality of abstract  Hilbert modules which are $p$-essentially normal, and therefore applies here.
\end{proof}

The following variant of Theorem \ref{thm:stab_EN} highlights somewhat different techniques.
Note that now the assumption is that there is stable division with respect to the $\cH_d^{(t+1)}$ norm, and the proof of the previous theorem does not seem to apply.

%%%%%%%%%%%%%%%
\begin{theorem}\label{thm:stab_EN2}
Let $t> -3$ and let $I$ be an ideal in $\mb{C}[z]$.
Assume that $I$ has the stable division property with respect to the $\cH_d^{(t+1)}$ norm.
Then the submodule $\ol{I}$ of $\cH_d^{(t)}$ is $p$-essentially normal for all $p>d$.
\end{theorem}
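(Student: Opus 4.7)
My plan is to replay the three-step strategy of Theorem~\ref{thm:stab_EN} inside the auxiliary space $\cH_d^{(t+1)}$ and then descend back to $\cH_d^{(t)}$.

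The first step is to derive $p$-essential normality in $\cH_d^{(t+1)}$. Let $\{f_1,\ldots,f_k\}$ be a stable generating set for $I$ in the $\cH_d^{(t+1)}$-norm and set $N_i := \ol{\langle f_i\rangle} \subset \cH_d^{(t+1)}$. Because each $N_i$ is closed, the exact stable division hypothesis together with the closed-subspaces case of Lemma~\ref{lem:T} forces $N_1 + \cdots + N_k$ to be closed, and hence equal to the closure $\ol{I}^{(t+1)}$ of $I$ in $\cH_d^{(t+1)}$. Since $t+1 > -2 > -3$, Theorem~1.1 of \cite{FX13} and Theorem~1.2 of \cite{FX15} apply verbatim inside $\cH_d^{(t+1)}$, so each $N_i$ is $p$-essentially normal for $p>d$. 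The same appeal to Kennedy's decomposition theorem as in the proof of Theorem~\ref{thm:stab_EN} then yields $p$-essential normality of $\ol{I}^{(t+1)}$ in $\cH_d^{(t+1)}$.

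The second step is to transfer this conclusion back from $\cH_d^{(t+1)}$ to $\cH_d^{(t)}$. The two spaces share the same polynomial core, and from (\ref{eq:tnorm}) the ratio $\|z^\alpha\|_{t+1}^2/\|z^\alpha\|_t^2 = (d+t+1)/(d+t+|\alpha|+1)$ depends only on $|\alpha|$. Let $D:\cH_d^{(t)}\to\cH_d^{(t+1)}$ be the unique unitary that acts as a positive scalar on each graded piece $\mb{H}_n$ and implements this ratio. A direct computation shows that conjugating the shifts $S_i^{(t+1)}$ by $D$ produces the shifts $S_i^{(t)}$ multiplied by a diagonal weight $B$, with $B-I$ a diagonal operator whose eigenvalues on $\mb{H}_n$ are of order $1/n$; a summability computation of the form $\sum_n \binom{n+d-1}{d-1} n^{-q} < \infty$ for $q>d$ gives $B-I \in \mathcal{L}^q$ for every $q > d$. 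Thus, modulo $\mathcal{L}^p$-perturbations, $D$ intertwines the pairs $(S_i^{(t)},(S_j^{(t)})^*)$ and $(S_i^{(t+1)},(S_j^{(t+1)})^*)$ on the ambient spaces.

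The main obstacle is this descent. Because $I$ is not assumed to be homogeneous, $\ol{I}^{(t)}$ need not be $D$-invariant, so there is no tautological unitary identification of the Hilbert modules $\ol{I}^{(t)}$ and $\ol{I}^{(t+1)}$. The heart of the proof should be a projection comparison, namely that $P_{\ol{I}^{(t)}}$ and $D^{-1} P_{\ol{I}^{(t+1)}} D$ differ by an operator whose commutators with the shifts lie in $\mathcal{L}^p$ for every $p > d$. Granting such a projection comparison, the $p$-essential normality of $\ol{I}^{(t+1)}$ produced in the first step propagates back to $\ol{I}^{(t)}$ in $\cH_d^{(t)}$, completing the proof.
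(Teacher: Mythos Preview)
Your first step is correct and indeed reproduces, in $\cH_d^{(t+1)}$, the argument of Theorem~\ref{thm:stab_EN}; and your computation in the second step that the unitary $D$ intertwines the ambient shifts modulo $\cL^q$ for $q>d$ is also right. But the proposal is not a proof: you explicitly identify the projection comparison
\[
P_{\ol{I}^{(t)}} \;\; \text{versus} \;\; D^{-1} P_{\ol{I}^{(t+1)}} D
\]
as ``the main obstacle'' and then simply \emph{grant} it. For a non-homogeneous ideal there is no a priori reason why these two projections should agree modulo $\cL^p$ (or have commutators with the shifts in $\cL^p$). The set $\ol{I}^{(t)}$ is not graded, $D$ does not carry it to $\ol{I}^{(t+1)}$, and the difference of the two projections could in principle be large; establishing any useful control over it is precisely the non-homogeneous difficulty the theorem is meant to address. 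In other words, your ``granted'' step is at least as hard as the theorem itself, and you give no mechanism for it.

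The paper avoids this descent entirely. Instead of comparing projections across the two spaces, it works inside $\cH_d^{(t)}$ and uses the Fang--Xia criterion (Proposition~\ref{prop:FX}): if an operator $T$ on $\cH_d^{(t)}$ satisfies $\|Tg\|_t \leq K\|g\|_{t+1}$ for $g \in I$, then $TP_N \in \cL^{2p}$ for $p>d$. The point is that Fang and Xia already prove, in the course of their principal-ideal theorem, the inequality $\|Q_i^{(t)} S_j^* (g f_i)\|_t \leq C_i \|g f_i\|_{t+1}$ for $g$ vanishing to high enough order at $0$; a finite-codimension argument (Lemma~\ref{lem:gen}) removes the vanishing restriction. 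The stable generating set in the $\cH_d^{(t+1)}$ norm then lets one write $h = \sum g_i f_i$ with $\sum \|g_i f_i\|_{t+1} \leq A\|h\|_{t+1}$, and since $P_N^\perp \leq Q_i^{(t)}$ one sums to obtain $\|P_N^\perp S_j^* h\|_t \leq AC\|h\|_{t+1}$. The Fang--Xia criterion finishes. So the $\cH_d^{(t+1)}$ norm enters not as an auxiliary space in which to prove essential normality and then transfer, but as the right-hand side of a norm inequality that directly yields Schatten-class membership in $\cH_d^{(t)}$.
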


\begin{proof}
Denote $N = \ol{I\,}^t$ (meaning the closure of $I$ with respect to the $\cH_d^{(t)}$ norm).
We will apply the techniques developed for the proof of Theorem 1.1 in \cite{FX13}.
One of the standard approaches for showing that a closed submodule $N$ of $H \otimes \mb{C}^r$ is $p$-essentially normal is to show that
\[
S^*_i P_N - P_N S^*_i = S^*_i P_N - P_N S^*_i P_N = P_N^\perp S^*_i P_N \in \mathcal{L}^{2p} ,
\]
(where $P_N$ denotes the orthogonal projection of $H$ onto $N$ and $P_N^\perp = I - P_N$).
See, e.g., \cite[Proposition 4.2]{Arv07}.
To show that $P_N^\perp S^*_i P_N$ is in $\mathcal{L}^{2p}$, we will use the following result from \cite[Proposition 4.2]{FX13}, which we reformulate slightly for our needs.
Recall that $\cH_d^{(t)}$ is always contained in $\cH_d^{(t+1)}$.

\begin{proposition}[Fang-Xia]\label{prop:FX}
Let $I$ and $N$ be as above, let $T$ be a linear operator on $\cH_d^{(t)}$ and suppose that there is some $K$ such that
\[
\|Tg\|_t \leq K\|g\|_{t+1}
\]
for all $g \in I$. Then $T P_N \in \cL^{2p}$ for all $p>d$.
\end{proposition}
By Fang and Xia's proposition and the above remarks, it suffices to show that there is a constant $K$ such that
\[
\|P_N^\perp S^*_j h\|_t \leq K \|h\|_{t+1} \,\, , \,\, h \in I.
\]

Let $f_1, \ldots, f_k$ be a stable generating set for $I$ in the $\cH_d^{(t+1)}$ norm.
Denote by $Q^{(t)}_i$ the orthogonal projection onto the complement in $\cH_d^{(t)}$ of the ideal generated by $f_i$.
In the proof of \cite[Theorem 1.1]{FX13} (see p. 3007 there) it is shown that there is an integer $\ell_i$ and a constant $C_i$ such that
\be\label{eq:ineq_QSqf}
\|Q_i^{(t)}S^*_j g f_i \|_{t} \leq C_i \|g f_i \|_{t+1}
\ee
for all $g$ satisfying $\partial^\alpha g(0) = 0 $ when $|\alpha|\leq \ell_i$
(for the range $t \in (-3,-2]$ we require the proof of \cite[Theorem 1.2]{FX15}, see Proposition 4.4 --- in particular Equation (4.10) --- and Proposition 5.10 there).
Our goal now is to show that the inequality \eqref{eq:ineq_QSqf} holds for all $g\in\mathbb{C}[z]$, with a perhaps larger constant $C_i$.
This follows from the fact that ${\overline E_i}^{t+1}$ has finite codimension in ${\overline {\langle f_i \rangle}}^{t+1}$, together with Lemma \ref{lem:gen} in the appendix below, but we wish to give the argument in full detail.

First, we note that the ideal
\[
E_i = \{g f_i : \partial^\alpha g(0) = 0 \textrm{ for all } |\alpha|\leq \ell_i \}
\]
is finite codimensional in $\langle f_i \rangle$.
Let $R_i = \mbox{span}\{z^\alpha f_i : |\alpha| \leq \ell_i\}$. Since the equivalence classes $[z^\alpha f_i],\,|\alpha| \leq \ell_i$, form a basis for $\langle f_i \rangle/ E_i$, we have $\langle f_i \rangle = E_i \dot{+} R_i$, direct sum as linear subspaces.

Next, we claim that ${\overline E_i}^{t+1} \cap R_i = \{0\}$, where ${\overline E_i}^{t+1}$ is the closure of $E_i$ in the $\mathcal H_d^{(t+1)}$ norm.
To see this, fix $g f_i\in R_i$ for some polynomial $g$ with $\deg (g) \leq l_i$.
Let $z^{\beta_i}$ be the monomial of least degree, with respect to graded lexicographic ordering, that appears with a non-zero coefficient in the expression of $f_i$.
Likewise, let $z^\alpha$ be a monomial of least degree that appears with a non-zero coefficient in the expression of $g$.
The coefficient of $z^{\alpha + \beta_i}$ in the expression of $g f_i$ is then nonzero.
Since every monomial that appears in any polynomial in $E_i$ is strictly greater than $z^{\alpha + \beta_i}$ in the aforementioned ordering, and since monomials are orthogonal in $\cH_d^{(t+1)}$, we have that $E_i \subseteq \{\mathbb C z^{\alpha + \beta_i}\}^{\perp}$ (where $\{\mathbb C z^{\alpha + \beta_i}\}^{\perp}$ denotes the orthogonal complement in $\cH_d^{(t+1)}$ of the one dimensional subspace spanned by $z^{\alpha + \beta_i}$).
Thus
$$
{\overline E_i}^{t+1}\subseteq \{\mathbb C z^{\alpha + \beta_i}\}^{\perp}.
$$
But as $\langle g f_i, z^{\alpha + \beta_i} \rangle \neq 0$, our claim that ${\overline E_i}^{t+1} \cap R_i = \{0\}$ follows.

Let $M =  E_i$, $N = R_i$ considered as subspaces of $\cH_d^{(t+1)}$, let $K = \cH_d^{(t)}$, and let $T$ be the restriction of $Q^{(t)}S_j^*$ to $\bC[z]$.
Lemma \ref{lem:gen} then tells us that \eqref{eq:ineq_QSqf} implies that $Q^{(t)}S_j^*$ is bounded from $\langle f_i \rangle = E_i + R_i\subset \cH_d^{(t+1)}$ to $\cH_d^{(t)}$, meaning that \eqref{eq:ineq_QSqf} holds for all $g \in \bC[z]$, perhaps with a bigger constant $C_i$.

Now let $C = \max{C_i}$.
Since $P_N^\perp \leq Q^{(t)}_i$, then for every $g \in \bC[z]$, every $f_i$ in the stable generating set, and all $j$,
\bes
\|P_N^\perp S^*_j g f_i \|_{t} \leq C \|g f_i \|_{t+1} .
\ees

For $h \in I$,
let $g_1, \ldots, g_k \in \bC[z]$ satisfy  \eqref{eq:sum_app} and \eqref{eq:stab_app} with respect to the $\cH_d^{(t+1)}$ norm and with $\epsilon = 0$.
Then we have
\begin{align*}
\|P_N^\perp S^{*}_j h \|_{t} &\leq \sum_{i=1}^k \|P_N^\perp S^{*}_j g_i f_i \|_{t}\\ &\leq \sum_{i=1}^k C \|g_i f_i \|_{t+1}\\ &\leq AC \|h\|_{t+1} .
\end{align*}
By Proposition \ref{prop:FX}, $P_N^\perp S^*_j P_N$ is in $\cL^{2p}$ for all $p>d$.
By the remarks at the beginning of the proof, we are done.
\end{proof}

%%%%%%%%%%%%%%%%%%%%%%%%%%%%%%%%%%%
\begin{remark}
Here is another proof of the claim ${\overline E_i}^{t+1} \cap R_i = \{0\}$ that works in any analytic Hilbert module in the sense of \cite{CG} where monomials are not necessarily orthogonal.
Following \cite[Theorem 2.1.7(2)]{CG}, one could write
\be\label{eq:charac}
E_i = \{q\in \langle f_i \rangle : p(D)q|_{0} = 0\, \text{ for all } p\in E_{i0}\} ,
\ee
where $E_{i0}$ is the characteristic space of $E_i$ at $0$ and $p(D)$ denotes the differential operator $\sum_{\alpha}a_{\alpha}\frac{\partial^{\alpha}}{{\partial z}^\alpha}$ for any polynomial $p = \sum_{\alpha}a_{\alpha}z^\alpha$. This is because the characteristic spaces of $E_i$ and $\langle f_i \rangle$ are same at any $\lambda\neq 0$ and different at $\lambda = 0$. The first assertion follows from the fact that $E_i = \mathfrak M_0^{l_i+1}\langle f_i \rangle$, $\mathfrak M_0$ is the maximal ideal of $\mathbb C[z]$ at $0$,  and by repeated use of \cite[Proposition 1.3]{DG} while the last follows as at $\lambda = 0$, one observes that
$$
p(D)\{z^\alpha f_i\}|_0 = \frac{\partial^{\alpha}p}{{\partial z}^\alpha}(D)f_i|_0 ,
$$
and hence
$$
E_{i0} = \{q\in\mathbb C[z] : \frac{\partial^{\alpha}q}{{\partial z}^\alpha}\in\langle f_i \rangle_{0} \mbox{~for~all~}\alpha \mbox{~such~that~}|\alpha| = l_i+1\} ,
$$
(where $\langle f_i \rangle_{0}$ is the characteristic space of $\langle f_i \rangle$ at $0$).
Having establsihed \eqref{eq:charac}, we obtain that
$$
{\overline E_i}^{t+1} \subset \{f\in \cH_d^{(t+1)} : p(D)f|_{\lambda} = 0,\, \mbox{~for~all~}p\in E_{i0}\},
$$
and this combined with \eqref{eq:charac} ensures that ${\overline E_i}^{t+1} \cap R_i = \{0\}$.
\end{remark}

%%%%%%%%%%%%%%%%%%%%%%%%%%%%%%%%%%%%%%%%%%%%%%%%%%%%%%%%%%%%%%%%%%%%%%%
\section{Stable division for quasi homogeneous ideals in $\mb{C}[x,y]$}
%%%%%%%%%%%%%%%%%%%%%%%%%%%%%%%%%%%%%%%%%%%%%%%%%%%%%%%%%%%%%%%%%%%%%%%

Theorems \ref{thm:stab_EN} and \ref{thm:stab_EN2} motivate us to find new examples of modules which have the (approximate) stable division property.
We will show that quasi homogeneous ideals in $\mb{C}[x,y]$ have the stable division property (it is convenient to use the notation $\mb{C}[x,y]$ for the case $d=2$).
Our discussion is an improvement to \cite[Section 2.2]{Sha11}, where it was proved that {\em homogeneous} ideals in $\mb{C}[x,y]$ have the stable division property.

Before proceeding, we record the following proposition, which shows that when the generating set consists of quasi homogeneous polynomials, the stable division and the approximate stable division properties are the same.
Since this result is not needed we omit the simple proof.

%%%%%%%%%%%%%%%%%%%
\begin{proposition}
Let $M$ be an ${\bf n}$-quasi homogeneous module in $\mb{C}[z] \otimes \mb{C}^r$.
Then an approximate stable generating set consisting of ${\bf n}$-quasi homogeneous polynomials is a stable generating set.
\end{proposition}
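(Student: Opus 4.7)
The strategy is to promote approximate representations to exact ones via a finite-dimensional compactness argument, and then assemble the resulting quasi-homogeneous pieces using orthogonality. Let $\pi_m$ denote the orthogonal projection of $H\otimes\mb{C}^r$ onto its ${\bf n}$-quasi homogeneous summand of degree $m$; each such summand is finite-dimensional (only finitely many $\alpha$ satisfy $n_1\alpha_1+\cdots+n_d\alpha_d=m$), and since every $f_i$ is quasi-homogeneous of some degree $m_i$, one has the key identity $\pi_m(g f_i) = (\pi_{m-m_i} g)\, f_i$ for any polynomial $g$.

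First I would reduce to the case where $h$ itself is quasi-homogeneous. Given $h\in M$, decompose $h = \sum_m h_m$ orthogonally (a finite sum). If for each $m$ one can write $h_m = \sum_i \tilde g_i^{(m)} f_i$ exactly with $\tilde g_i^{(m)} f_i \in \pi_m(H\otimes\mb{C}^r)$ and $\sum_i\|\tilde g_i^{(m)} f_i\|\leq A\|h_m\|$, then $g_i := \sum_m \tilde g_i^{(m)}$ satisfies $h = \sum_i g_i f_i$ exactly, and orthogonality of distinct quasi-homogeneous degrees yields
\[
\|g_i f_i\|^2 \;=\; \sum_m \|\tilde g_i^{(m)} f_i\|^2 \;\leq\; A^2 \sum_m \|h_m\|^2 \;=\; A^2\|h\|^2,
\]
so $\sum_i\|g_i f_i\|\leq kA\|h\|$ (one may even keep the constant $A$ by working with the $\ell^2$-type bound of Remark~\ref{rem:notsquare}).

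For the quasi-homogeneous case, given $h$ of degree $m$, apply the approximate stable division property with $\epsilon = 1/n$ to obtain $g_i^{(n)}$ with $\|h - \sum_i g_i^{(n)} f_i\|\leq 1/n$ and $\sum_i\|g_i^{(n)}f_i\|\leq A\|h\|$, and pass to $\tilde g_i^{(n)} := \pi_{m-m_i}(g_i^{(n)})$; the identity above together with $\pi_m h = h$ shows the approximation bound is preserved (in fact improved), while contractivity of $\pi_m$ preserves the norm bound $\|\tilde g_i^{(n)} f_i\|\leq\|g_i^{(n)} f_i\|$. The vectors $\tilde g_i^{(n)} f_i$ now lie in the bounded subset $\{v\in \pi_m(H\otimes\mb{C}^r):\|v\|\leq A\|h\|\}$ of the finite-dimensional space $\pi_m(H\otimes\mb{C}^r)$, hence after passing to a subsequence they converge to limits $v_i$; these $v_i$ lie in the closed finite-dimensional subspace $\{g f_i : g\in \pi_{m-m_i}(\mb{C}[z]\otimes\mb{C}^r)\}$, so $v_i = \tilde g_i f_i$ for some $\tilde g_i$, and passage to the limit in the approximation inequality gives $h = \sum_i \tilde g_i f_i$ exactly with $\sum_i \|\tilde g_i f_i\|\leq A\|h\|$.

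I do not foresee any serious obstacle; the only point requiring care is the closedness of the image of multiplication by $f_i$ on each quasi-homogeneous summand, which is automatic by finite-dimensionality, and the observation that the decomposition $h=\sum_m h_m$ is genuinely finite for $h$ a polynomial, so no convergence question arises in the assembly step.
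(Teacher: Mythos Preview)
Your argument is correct. The paper itself omits the proof of this proposition, remarking only that ``since this result is not needed we omit the simple proof,'' so there is nothing to compare against; your approach---project onto the finite-dimensional quasi-homogeneous summand of degree $m$, extract a convergent subsequence by compactness to get an exact representation, then reassemble the quasi-homogeneous pieces using orthogonality---is the natural one and is almost certainly what the authors had in mind.

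Two small points are worth making explicit. First, the reduction to quasi-homogeneous $h$ tacitly uses that each component $h_m$ again lies in $M$ (so that the approximate stable division hypothesis applies to it); this holds because $M$ is generated by quasi-homogeneous elements, so writing $h=\sum_i p_i f_i$ and decomposing each $p_i$ into quasi-homogeneous parts gives $h_m=\sum_i(\pi_{m-m_i}p_i)f_i\in M$. Second, the polynomials $g_i$ are scalar-valued, so when you write $\pi_{m-m_i}(g_i^{(n)})$ you mean the quasi-homogeneous projection in $\bC[z]$ rather than in $\bC[z]\otimes\bC^r$; your notation slightly conflates the two, but the intent is clear and the identity $\pi_m(gf_i)=(\pi_{m-m_i}g)f_i$ you rely on is correct.
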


%%%%%%%%%%%%%%%%%%%%%%%%%%%%%%%%%%
\begin{lemma}\label{lem:stab_div_diff_norms}
Let $M$ be a quasi homogeneous submodule in $\mb{C}[z] \otimes \mb{C}^r$ and let $t>-d$.
Then a generating set for $M$ consisting of quasi homogeneous polynomials is a stable generating set with respect to the $H^2_d$ norm if and only if it is a stable generating set with respect to the $\cH_d^{(t)}$ norm.
\end{lemma}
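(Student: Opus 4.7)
The plan is to work one quasi-degree at a time, leveraging orthogonality of distinct quasi-degrees in both norms together with a \emph{uniform} comparison between the two norms on each quasi-homogeneous component. First I would note that since monomials form an orthogonal basis of both $H^2_d$ and $\cH_d^{(t)}$, and monomials of distinct ${\bf n}$-quasi-degrees are disjoint, ${\bf n}$-quasi-homogeneous polynomials of different quasi-degrees are orthogonal in both norms; consequently, for any polynomial $h$ with quasi-homogeneous decomposition $h = \sum_m h_m$ one has $\|h\|^2 = \sum_m \|h_m\|^2$ in either norm. Moreover, a ${\bf n}$-quasi-homogeneous polynomial $p$ of quasi-degree $m$ is supported on monomials $z^\alpha$ with $n_1\alpha_1 + \cdots + n_d\alpha_d = m$, so the total degrees of its monomials lie in $[\lceil m/N\rceil, m]$ with $N = \max_i n_i$; hence \eqref{eq:equiv_norm} gives
\[
c_{m,t}\,\|p\|^2_{H^2_d} \leq \|p\|^2_t \leq c_{\lceil m/N\rceil, t}\,\|p\|^2_{H^2_d}.
\]
The crucial point is that $c_{\lceil m/N\rceil, t}/c_{m,t}$ is bounded above by some constant $K$ independent of $m$: from $c_{n,t} = \Gamma(n+1)\Gamma(d+t+1)/\Gamma(d+t+n+1)$ and the standard asymptotic $\Gamma(n+a)/\Gamma(n+b) \sim n^{a-b}$ one obtains $c_{n,t} \sim \Gamma(d+t+1)\,n^{-(d+t)}$, whence the ratio tends to $N^{d+t}$.

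Next I would transfer the stable division property, assuming $\{f_1,\ldots,f_k\}$ is stable in the $H^2_d$ norm with constant $A$ (the reverse direction is entirely symmetric). Given $h = \sum_m h_m \in M$, each $h_m$ lies in $M$ by quasi-homogeneity of $M$, so I would apply the given division to each piece to obtain $g_i^{(m)} \in \bC[z]$ with $h_m = \sum_i g_i^{(m)} f_i$ and $\sum_i \|g_i^{(m)} f_i\|_{H^2_d} \leq A\|h_m\|_{H^2_d}$. Letting $\pi_p$ denote the linear projection onto the span of monomials of quasi-degree $p$ (the same linear map in both norms), I set $\tilde g_i^{(m)} = \pi_{m-\deg_{\bf n}(f_i)}(g_i^{(m)})$. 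Applying $\pi_m$ to $h_m = \sum_i g_i^{(m)} f_i$ yields $h_m = \sum_i \tilde g_i^{(m)} f_i$, while orthogonality of distinct quasi-degrees forces $\|\tilde g_i^{(m)} f_i\| \leq \|g_i^{(m)} f_i\|$ in either norm. Since $h_m$ and each $\tilde g_i^{(m)} f_i$ are now quasi-homogeneous of the same quasi-degree $m$, the uniform comparison above gives
\[
\sum_i \|\tilde g_i^{(m)} f_i\|_t \leq c_{\lceil m/N\rceil,t}^{1/2}\sum_i \|\tilde g_i^{(m)} f_i\|_{H^2_d} \leq A\,c_{\lceil m/N\rceil,t}^{1/2}\|h_m\|_{H^2_d} \leq A\,K^{1/2}\|h_m\|_t.
\]

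Finally I would assemble the pieces by setting $g_i = \sum_m \tilde g_i^{(m)}$, a finite sum of polynomials since $h$ has only finitely many nonzero quasi-homogeneous parts; then $\sum_i g_i f_i = h$, and orthogonality of distinct quasi-degrees in $\cH_d^{(t)}$ gives
\[
\sum_i \|g_i f_i\|_t^2 = \sum_i \sum_m \|\tilde g_i^{(m)} f_i\|_t^2 \leq A^2 K \sum_m \|h_m\|_t^2 = A^2K\,\|h\|_t^2,
\]
which is the desired stable division estimate in view of Remark \ref{rem:notsquare}. The hard part will be establishing the uniform bound on $c_{\lceil m/N\rceil, t}/c_{m,t}$ from the first paragraph; once this gamma-function estimate is in hand, the remainder is routine bookkeeping with orthogonal decompositions.
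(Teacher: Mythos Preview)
Your proof is correct and follows essentially the same approach as the paper: reduce to quasi-homogeneous $h$ of quasi-degree $m$, use the norm comparison \eqref{eq:equiv_norm} together with the fact that the total degrees of monomials in $h$ lie between $\lceil m/N\rceil$ and $m$, and then invoke the uniform bound on $c_{\lceil m/N\rceil,t}/c_{m,t}$ coming from Gamma-function asymptotics (the paper records this as \eqref{eq:NEEDS_CHECKING}). The only difference is cosmetic: the paper simply assumes $h$ and the $a_jf_j$ are quasi-homogeneous of the same quasi-degree, whereas you spell out the projection $\pi_{m-\deg_{\bf n}(f_i)}$ and the reassembly over $m$ explicitly, which is a welcome clarification of a step the paper leaves implicit.
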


\begin{proof}
Suppose that $M$ be an ${\bf n}$-quasi homogeneous submodule, where ${\bf n} = (n_1, \ldots, n_d)$.
Denote $n = \max_{1 \leq i \leq d} n_i$.
Let $f_1, \ldots, f_k$ be a generating set for $M$ such that every $f_i$ is quasi homogeneous.
Let $h$ be a quasi homogeneous element of $M$, $deg_{\bf n}(h) = m$.
If $z^\alpha$ is some monomial appearing in $h$, then $|\alpha| = \alpha \cdot {\bf 1} \leq \alpha \cdot {\bf n} = deg_{\bf n}(z^\alpha) = m$;
while on the other hand $m = \alpha \cdot {\bf n} \leq n |\alpha|$.
Thus $h = \sum_{i=\floor{m/n}}^m h_i$ where every $h_i$ is homogeneous, so by (\ref{eq:equiv_norm})
\[
c_{m,t}\|h\|^2_{H^2_d} \leq \|h\|^2_t \leq c_{\floor{m/n},t}\|h\|^2_{H^2_d} .
\]
Now suppose that $h = \sum_{j=1}^k a_j f_j$ and that $deg_{\bf n}(a_j f_j) = m$ for all $j$. Assume that $\sum_j \|a_j f_j\|^2_{H^2_d} \leq C \|h\|^2_{H^2_d}$. Then
\begin{align*}
\sum_j \|a_j f_j\|_t^2 &\leq \sum_j c_{\floor{m/n},t}\|a_j f_j\|^2_{H^2_d}\\  &\leq C c_{\floor{m/n},t}\|h\|^2_{H^2_d} \\ &\leq C \frac{c_{\floor{m/n},t}}{c_{m,t}} \|h\|^2_t .
\end{align*}
Since
\be\label{eq:NEEDS_CHECKING}% Ok I checked.
\lim_{m \rightarrow \infty} \frac{c_{\floor{m/n},t}}{c_{m,t}} \in [1, \infty),
\ee
we see that stable division with respect to the $H^2_d$ norm implies stable division with respect to the $\cH_d^{(t)}$ norm.
The converse is proved in the same way.
\end{proof}

We now recall some facts from computational algebraic geometry.
A standard reference for this is \cite{CLS92}. Let ${\bf n} \in \mb{N}^2$ be a weight vector.
We fix an ordering on the monomials in $\mb{C}[x,y]$ that is determined by ${\bf n}$: if $a$ and $b$ are non-zero scalars, then we say that $b x^m y^n$ is {\em greater than} $a x^k y^l$, denoted $a x^k y^l < b x^m y^n$, if and only if
\[
(k n_1+ l n_2 <  m n_1 + n n_2) \textrm{ or } (k n_1+ l n_2 =  m n_1 + n n_2 \textrm{ and } k < m).
\]
The {\em leading term} of a polynomial $f$, denoted $LT(f)$, is the largest monomial appearing in $f$. We say that $b x^m y^n$ is {\em divisible} by $a x^k y^l$ (where $a,b \in \bC \setminus \{0\}$) if and only if $k \leq m$ and $l \leq n$, and then we have $(b x^m y^n) / (a x^k y^l) := (b/a) x^{m-k}y^{n-l}$.

Let us remind the reader of the {\em division algorithm} for polynomials in several variables (we need it only in two variables).
Given $h, f_1, \ldots, f_k \in \mb{C}[x,y]$ we may divide $h$ by $f_1, \ldots, f_k$ with remainder, meaning that we find $a_1, \ldots, a_k, r \in \mb{C}[x,y]$ (with $LT(r)$ not divisible by any $LT(f_i)$) such that
\be\label{eq:div_w_r}
h = \sum a_i f_i + r .
\ee
The decomposition \eqref{eq:div_w_r} is obtained using the following algorithm.
We start with
\[
r = a_1 = \ldots = a_k = 0 ,
\]
and define a temporary polynomial $p$ which is set at the start to $p=h$.
We now start iterating over the terms of $p$ in decreasing order.
If $LT(p)$ is divisible by $LT(f_i)$ for some $i$, we replace  $a_i$ by $a_i + LT(p)/LT(f_i)$ and replace $p$ with $p - \left(LT(p)/LT(f_i)\right) f_i$.
In principal one is free to choose which $i$ to use, but we will always choose the {maximal $i$ possible}.
If $LT(p)$ is not divisible by any of the $LT(f_i)$s, we put $r = r + LT(p)$ and replace $p$ with $p - LT(p)$.
We continue this way until $p=0$.

Even if $h$ is in the ideal generated by $f_1, \ldots, f_k$, the above division algorithm might terminate with a non-trivial remainder.
However, for every ideal $I$ there exists a {\em Groebner basis} $\{f_1, \ldots, f_k\}$, which is a generating set for $I$ with the property that the above algorithm, when run with $h \in I$ and $f_1, \ldots, f_k$ as input, terminates with $r = 0$ \cite[Corollary 2, p. 82]{CLS92}.

%%%%%%%%%%%%%%%%%%%%%%%%%%%%%%%%%%%
\begin{lemma}\label{lem:stab_div_r}
Let ${\bf n} = (n_1, n_2)$ be a weight, and let $f_1, \ldots, f_k$ be ${\bf n}$-quasi homogeneous polynomials in $\mb{C}[x,y]$ with $deg_{\bf n}(f_i) = m$ for $i=1, \ldots, k$.
There exists a constant $C$ such that for every ${\bf n}$-quasi homogeneous polynomial $h \in \mb{C}[x,y]$ there are $a_1, \ldots, a_k, r \in \mb{C}[x,y]$ such that
\[
h = \sum_{i=1}^k a_i f_i + r
\]
with
\be\label{eq:stab_div_r}
\sum_{i=1}^k \|a_i f_i\|^2_{H^2_d} \leq C\left( \|h\|^2_{H^2_d} + \|r\|^2_{H^2_d} \right) .
\ee
In fact, the $a_i$s and $r$ can be found by running the division algorithm.
\end{lemma}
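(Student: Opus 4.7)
My plan is to use ${\bf n}$-quasi homogeneity to reduce \eqref{eq:stab_div_r} to a family of uniform estimates inside the graded pieces, and then to exploit the triangular structure that the division algorithm imposes on each piece.

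First I would observe that since each $f_i$ is ${\bf n}$-quasi homogeneous of degree $m$, running the algorithm on an ${\bf n}$-quasi homogeneous input $h$ of degree $M$ keeps the intermediate polynomial $p$ inside the finite-dimensional space $V_M$ of ${\bf n}$-quasi homogeneous polynomials of degree $M$ at every step, and produces $a_i\in V_{M-m}$ and $r\in V_M$. Moreover, the divisor chosen at each step depends only on which monomial currently heads $p$ (not on its scalar coefficient), so the algorithm defines a \emph{linear} map $\Phi_M:V_M\to V_{M-m}^{\oplus k}\oplus V_M$. Since any ${\bf n}$-quasi homogeneous polynomial decomposes into its graded pieces and \eqref{eq:stab_div_r} is preserved under such decompositions, it suffices to prove \eqref{eq:stab_div_r} for $h\in V_M$ with a constant $C$ independent of $M$.

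Next I would partition the monomial basis of $V_M$ into $\mathcal{D}=\{\mu\in V_M:LT(f_i)\mid\mu\text{ for some }i\}$ and $\mathcal{N}=V_M\setminus\mathcal{D}$, and for $\mu\in\mathcal{D}$ let $i(\mu)$ be the maximal $i$ with $LT(f_{i(\mu)})\mid\mu$. The map
\[
E_M:\operatorname{span}(\mathcal{D})\oplus\operatorname{span}(\mathcal{N})\longrightarrow V_M,\qquad E_M(\mu)=\tfrac{\mu}{LT(f_{i(\mu)})}\,f_{i(\mu)},\ E_M(\nu)=\nu,
\]
is triangular with unit diagonal in the given monomial order (the leading term of $E_M(\mu)$ is $\mu$ itself), hence invertible. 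A direct check shows that applying $E_M^{-1}$ to $h$ and regrouping the resulting coefficients on $\mathcal{D}$ by the value of $i(\mu)$ recovers, up to the leading coefficients of the $f_i$'s, the output of the division algorithm. Since multiplication by each $f_i$ is a bounded operator on $H^2_d$ with multiplier norm depending only on $f_i$, the bound $\|a_i f_i\|^2_{H^2_d}\le\|f_i\|^2_{\Mult(H^2_d)}\|a_i\|^2_{H^2_d}$ reduces \eqref{eq:stab_div_r} further to showing $\sum_i\|a_i\|^2_{H^2_d}\le C(\|h\|^2+\|r\|^2)$ uniformly in $M$.

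This last estimate is the heart of the matter and the step I expect to be the main obstacle. Equip $\operatorname{span}(\mathcal{D})\oplus\operatorname{span}(\mathcal{N})$ with the weighted $\ell^2$ norm in which $\mu\in\mathcal{D}$ has weight $\|\mu/LT(f_{i(\mu)})\|_{H^2_d}$ and $\nu\in\mathcal{N}$ has weight $\|\nu\|_{H^2_d}$; the pairwise orthogonality of distinct monomials in $H^2_d$ makes this weighted norm squared equal exactly $\sum_i\|a_i\|^2_{H^2_d}+\|r\|^2_{H^2_d}$. The problem thereby becomes a uniform lower bound on the smallest singular value of $E_M$ viewed as a map from this weighted $\ell^2$ into $V_M$ with its $H^2_d$-norm. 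Writing $E_M=I+L_M$ with $L_M$ strictly triangular and arising from the ghost contributions $(\mu/LT(f_{i(\mu)}))(f_{i(\mu)}-LT(f_{i(\mu)}))$, the matrix entries of $L_M$ in the weighted basis are, up to fixed coefficients of the $f_j$'s, ratios of $H^2_d$-norms of monomials differing by a shift in the support of some $f_i-LT(f_i)$. The main technical task is to show that these ratios are controllable along the staircase of exponents in $V_M$ in a manner that gives a uniform weighted operator-norm bound on $L_M$, and hence (using strict triangularity, which makes $L_M$ nilpotent on each $V_M$) a uniform bound on the Neumann series $E_M^{-1}=\sum_{n\ge 0}(-L_M)^n$. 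This is where the hypothesis $d=2$ enters crucially: the staircase is one-dimensional and the required factorial ratios reduce to elementary binomial-coefficient estimates, which in higher dimensions seem to fail, consistent with the lemma being restricted to two variables.
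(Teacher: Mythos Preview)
Your linear-algebraic framing is correct: the division algorithm does act linearly on each $V_M$, and the map $E_M$ is invertible with $E_M^{-1}$ reproducing the algorithm output. The gap lies in the reduction you make via the multiplier norm. You pass from $\sum_i\|a_i f_i\|^2$ to $\sum_i\|a_i\|^2$ using $\|a_i f_i\|\le\|f_i\|_{\Mult}\|a_i\|$, and then set out to prove
\[
\sum_i \|a_i\|^2_{H^2_d} \le C\big(\|h\|^2_{H^2_d}+\|r\|^2_{H^2_d}\big)
\]
uniformly in $M$. But this stronger inequality is simply false, already for $k=1$. Take ${\bf n}=(1,1)$, $f_1=x$, and $h=xy^{M-1}$. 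The algorithm gives $a_1=y^{M-1}$, $r=0$, and in $H^2_2$ one has $\|a_1\|^2=1$ while $\|h\|^2=\|xy^{M-1}\|^2=\binom{M}{1}^{-1}=1/M$. Thus $\|a_1\|^2/\|h\|^2=M\to\infty$, so the smallest singular value of $E_M$ in your weighted norm tends to zero and no uniform Neumann-series bound can hold. (Note that $\|a_1 f_1\|^2=\|h\|^2$ here, so the \emph{original} estimate is trivial; it is precisely the passage to $\|a_i\|$ that destroys it.)

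Even if you repair the target norm so that it records $\|a_i f_i\|$ rather than $\|a_i\|$, the Neumann-series strategy is still incomplete: the nilpotency index of $L_M$ is $\dim V_M$, which grows with $M$, so a uniform bound $\|L_M\|\le C$ with $C\ge 1$ would not control $\sum_{n\ge 0}(-L_M)^n$. The paper avoids this entirely by a different mechanism. It argues by induction on $k$, and the crucial structural observation is that along the run of the algorithm the $y$-exponent $t$ of $LT(p)$ is non-decreasing; consequently $f_1$ is invoked only while $j_1\le t<j_2$, i.e.\ at most $j_2-j_1$ times, a bound depending on the $f_i$'s but \emph{not} on $M$. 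One then checks the single-step estimate $\|(LT(p)/LT(f_1))f_1\|^2\le C\|p\|^2$ by an explicit factorial computation, applies it those boundedly many times, and hands the remaining $p$ (now with $t\ge j_2$) to the inductive hypothesis for $f_2,\ldots,f_k$. This ``each $f_i$ is used boundedly often'' is exactly the two-variable feature that makes the argument go through, and it is what your approach is missing.
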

\begin{remark}
Note that we will use the same weight ${\bf n}$ determining quasi homogeneity as the weight determining the monomial ordering.
In fact, we choose the order on monomials according to type of quasi homogeneous polynomials we wish to deal with.
In the case that we are dealing with homogeneous ideals then the monomial order is the graded lexicographic order, as in \cite[Lemma 2.3]{Sha11} --- on which the following proof is modeled.
\end{remark}
\begin{proof}
In the proof we assume without loss of generality that $n_1 \geq n_2$.
We will show, by induction on $k$, that there is a constant $C$ such that when running the division algorithm in an appropriate manner to divide $h$ by $f_1, \ldots, f_k$, one has the estimate (\ref{eq:stab_div_r}).

If $k=1$, then this is obvious from the triangle inequality, so assume $k>1$.
Suppose that $LT(f_1) > LT(f_2) > \ldots > LT(f_k)$. Let
\[
f_i = \sum_{j}a_{ij}x^{\frac{m-n_2j}{n_1}}y^j
\]
where the sum is on all $j$ starting from $j_i$ until the last such $j$ for which there is some non-negative integer $l$ satisfying $m = l n_1 + j n_2$.
Because of our assumptions we have $j_1 < j_2 < \ldots < j_k$.

By compactness considerations, we may assume that $deg_{\bf n}(h) = n > 4m$.
Now initiate the algorithm as in the description above by setting $p = h$.
At a certain iteration of the division algorithm we have
\[
LT(p) = b_t x^{\frac{n - tn_2}{n_1}}y^t.
\]
When $t<j_1$, then $LT(p)$ is not divisible by any $LT(f_i)$, so we move $LT(p)$ to the remainder.
This happens at most $j_1-1$ iterations.
We will use $f_1$ to divide $p$ at most $j_2 - j_1$ iterations, only when $j_1 \leq t < j_2$.
After $t$ becomes greater than $j_2$ we will never have to use $f_1$ again, and we can use the inductive hypothesis.
All there is to check is that there is some constant $C$, independent of $h$, such that $\|p\|^2 \leq C \|h\|^2$ and $\|a_1 f_1 \|^2 \leq C\|h\|^2$ after the last iteration when the algorithm used $f_1$.

When $t<j_1$, $\|p\|$ only decreases and $a_i$ does not change.
Then there are at most $j_2 - j_1$ iterations in which $a_1 f_1$ is modified to $(a_1 + LT(p)/LT(f_1))f_1$ and $p$ is modified to $p - LT(p)/LT(f_1) f_1$.
Thus we will be done once we show that there is some constant $C$ such that
\[
\|LT(p)/LT(f_1) f_1\|^2 \leq C\|p\|^2.
\]
We compute
\[
\frac{LT(p)}{LT(f_1)} f_1 = \frac{b_t}{a_{1j_1}} \sum a_{1j}x^{\frac{n-n_2(t+j-j_1)}{n_1}}y^{t+j-j_1}.
\]
Therefore
\[
\left\|\frac{LT(p)}{LT(f_1)} f_1\right\|^2 = \left|\frac{b_t}{a_{1j_1}}\right|^2 \sum |a_{1j}|^2 \frac{(\frac{n-n_2(t+j-j_1)}{n_1})!(t+j-j_1)!}{(\frac{n+ (n_1 - n_2)(t+j-j_1)}{n_1})!} .
\]
Now consider the integer $s = t+j-j_1$ occurring in the above expression.
Because $j$ runs from $j_1$ to the highest power of $y$ appearing in $f_1$, we have that $t \leq s \leq 2m$. Thus
\begin{align*}
\frac{(\frac{n-n_2s}{n_1})!s!}{(\frac{n+ (n_1 - n_2)s}{n_1})!} & \leq \frac{(\frac{n-n_2s}{n_1})!(2m)!}{(\frac{n+ (n_1 - n_2)t}{n_1})!}\\ &\leq \frac{(\frac{n-n_2t}{n_1})!(2m)!}{(\frac{n+ (n_1 - n_2)t}{n_1})!} \\
& \leq (2m)! \frac{(\frac{n-n_2t}{n_1})!t!}{(\frac{n+ (n_1 - n_2)t}{n_1})!}\\ & = \frac{(2m)!}{|b_t|^2} \|LT(p)\|^2.
\end{align*}
Thus we have the bound
\[
\|LT(p)/LT(f_1) f_1\|^2 \leq \frac{(2m)!}{|a_{1j_1}|^2} \sum |a_{1j}|^2 \|p\|^2,
\]
and since $C = \frac{(2m)!}{|a_{1j_1}|^2} \sum |a_{1j}|^2$ is independent of $h$, this completes the proof.
\end{proof}

%%%%%%%%%%%%%%%
\begin{theorem}\label{thm:stab_div_quasi}
Let $I$ be an ${\bf n}$-quasi homogeneous ideal in $\mb{C}[x,y]$.
For all $t \geq -2$, $I$ has the stable division property in the $\cH_2^{(t)}$ norm.
If $\{f_1, \ldots, f_k\}$ is a Groebner basis for $I$ consisting of ${\bf n}$-quasi homogeneous polynomials of the same degree, then it is a stable generating set.
\end{theorem}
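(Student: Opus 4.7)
The plan is to reduce to the $H^2_d$ norm via Lemma~\ref{lem:stab_div_diff_norms}, then combine the controlled division algorithm of Lemma~\ref{lem:stab_div_r} with the orthogonality of ${\bf n}$-quasi homogeneous components of distinct degrees in $H^2_d$. Since $I$ and any proposed stable generating set consist of ${\bf n}$-quasi homogeneous polynomials, that reduction is immediate.

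For the second assertion, let $\{f_1,\ldots,f_k\}$ be a Groebner basis of ${\bf n}$-quasi homogeneous polynomials of common ${\bf n}$-degree $m$, and take any $h \in I$. Using that $I$ is ${\bf n}$-quasi homogeneous, I decompose $h = \sum_n h_n$ into its ${\bf n}$-quasi homogeneous components; this is an orthogonal decomposition in $H^2_d$, and each $h_n \in I$. Applying Lemma~\ref{lem:stab_div_r} to $h_n$ yields $h_n = \sum_i a_{i,n} f_i + r_n$ with
\[
\sum_i \|a_{i,n} f_i\|^2 \leq C\bigl(\|h_n\|^2 + \|r_n\|^2\bigr),
\]
and the Groebner basis property forces $r_n = 0$. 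Setting $g_i = \sum_n a_{i,n}$, each $a_{i,n} f_i$ is ${\bf n}$-quasi homogeneous of degree $n$ (with $a_{i,n}$ of degree $n-m$ and $f_i$ of degree $m$), so the summands across distinct $n$ are mutually orthogonal in $H^2_d$, giving
\[
\sum_i \|g_i f_i\|^2 = \sum_{i,n} \|a_{i,n} f_i\|^2 \leq C \sum_n \|h_n\|^2 = C \|h\|^2.
\]
Remark~\ref{rem:notsquare} then converts this to the required bound $\sum_i \|g_i f_i\| \leq A \|h\|$.

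For the first assertion, the plan is to exhibit a Groebner basis of ${\bf n}$-quasi homogeneous polynomials of the same degree for every ${\bf n}$-quasi homogeneous ideal, so that the second assertion applies. A Groebner basis of quasi homogeneous polynomials exists by standard graded-ideal theory, but its elements typically carry different ${\bf n}$-degrees, and a naive attempt to equalize them by multiplying generators by monomials shrinks the ideal they generate. The main obstacle of the plan is therefore this existence step --- producing a same-degree Groebner basis (or, equivalently, extending Lemma~\ref{lem:stab_div_r} to Groebner bases with generators of mixed ${\bf n}$-degrees). I expect this to be tractable by exploiting the staircase structure of the leading-term ideal peculiar to two variables, passing to a non-minimal Groebner basis in which the original generators are supplemented by additional quasi homogeneous polynomials in a suitably chosen common ${\bf n}$-degree.
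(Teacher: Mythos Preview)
Your proof of the second assertion is correct and matches the paper's argument: reduce to the $H^2_2$ norm via Lemma~\ref{lem:stab_div_diff_norms}, decompose $h$ into orthogonal quasi homogeneous pieces, apply Lemma~\ref{lem:stab_div_r} to each, and use the Groebner basis property to kill the remainders.

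The gap is in the first assertion. Your instinct that multiplying generators by monomials shrinks the ideal is exactly right, and your proposed fix --- supplementing the basis by further quasi homogeneous polynomials so that \emph{all} elements share a common ${\bf n}$-degree --- cannot succeed in general. A same-degree Groebner basis for $I$ itself need not exist: take $I=(x,y^2)$ with ${\bf n}=(1,1)$; any Groebner basis must contain a polynomial with leading term $x$ (forcing degree $1$) and one with leading term $y^2$ (forcing degree $2$). So no amount of enlarging the basis inside $I$ will equalize degrees.

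The missing observation is that you do not need a same-degree Groebner basis for $I$ at all. Starting from a quasi homogeneous Groebner basis $\{f_1,\ldots,f_k\}$ of possibly different ${\bf n}$-degrees, multiply each $f_i$ by suitable monomials to produce a Groebner basis of a \emph{finite-codimensional subideal} $J=\bigoplus_{n\ge m} I_n$ (for $m$ large enough), all of whose elements have ${\bf n}$-degree $m$. Apply the second assertion to $J$ to get a stable generating set $\{g_1,\ldots,g_l\}$ for $J$. Then adjoin any basis $\{v_1,\ldots,v_s\}$ of the finite-dimensional piece $\bigoplus_{n<m} I_n$. For $h\in I$, the splitting $h=h_{<m}+h_{\ge m}$ is orthogonal; on $h_{\ge m}$ use the stable division for $J$, and on $h_{<m}$ use that on a finite-dimensional space the map $(c_1,\ldots,c_s)\mapsto\sum c_j v_j$ is bicontinuous. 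This gives stable division for $I$. The shrinking you worried about is harmless precisely because it is only by finite codimension.
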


\begin{proof}

One can always construct a Groebner basis for $I$ consisting of ${\bf n}$-quasi homogeneous polynomials (Exercise 3 on p. 377,  \cite{CLS92}).
By multiplying basis elements by monomials one then obtains a Groebner basis consisting of ${\bf n}$-quasi homogeneous polynomials of the same degree $m$, for the finite co-dimensional subideal $I_m \oplus I_{m+1} \oplus \ldots$ where $m$ is sufficiently large.
Thus, we concentrate on proving the second assertion.

Let $\{f_1, \ldots, f_k\}$ be a Groebner basis for $I$ consisting of ${\bf n}$-quasi homogeneous polynomials of the same degree $m$.
By Lemma \ref{lem:stab_div_diff_norms}, it suffices prove the assertion for $t=-2$, that is, to show that $\{f_1, \ldots, f_k\}$ is a stable generating set for $I$ in the $H^2_2$ norm.

Let $h \in I$.
We may assume $h$ is quasi homogeneous, otherwise decompose it into an orthogonal sum of quasi homogeneous polynomials.
We divide $h$ by $f_1, \ldots, f_k$ and obtain $h = \sum a_i f_ i + r$ with (\ref{eq:stab_div_r}).
But since $\{f_1, \ldots, f_k\}$ is a Groebner basis, $r = 0$ (Corollary 2 on p. 82,  \cite{CLS92}).
Thus $\{f_1, \ldots, f_k\}$ is a stable generating set, and $I$ has the stable division property.
\end{proof}

Putting together the previous theorem with either Theorem \ref{thm:stab_EN} or Theorem \ref{thm:stab_EN2} we obtain the following corollary.

%%%%%%%%%%%%%%%%%%%%%%%%%%%%%%%%%%%%%%%%%%%
\begin{corollary}\label{cor:ess_norm_quasi}
Let $t\geq -2$ and let $I$ be a quasi homogeneous ideal in $\mb{C}[x,y]$.
Then the submodule $\ol{I}$ of $\cH_2^{(t)}$ is $p$-essentially normal for all $p>2$.
\end{corollary}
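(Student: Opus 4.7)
The plan is to combine Theorem \ref{thm:stab_div_quasi} with Theorem \ref{thm:stab_EN} (or equivalently Theorem \ref{thm:stab_EN2}) in a straightforward manner, since the ranges of validity match up perfectly. The corollary is presented in the excerpt as an immediate consequence, and indeed there is no additional analytical work needed; the two main theorems of the paper were designed precisely to fit together in this way.

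Concretely, I would argue as follows. Given a quasi homogeneous ideal $I \triangleleft \mb{C}[x,y]$ and a fixed $t \geq -2$, Theorem \ref{thm:stab_div_quasi} (applied to a Groebner basis consisting of $\bf n$-quasi homogeneous polynomials of common degree, for the finite-codimensional subideal $I_m \oplus I_{m+1} \oplus \cdots$) gives the stable division property for $I$ with respect to the $\cH_2^{(t)}$ norm. Since the stable division property is a strictly stronger condition than the approximate stable division property (take $\epsilon = 0$ in \eqref{eq:sum_app}), the hypothesis of Theorem \ref{thm:stab_EN} is met. Moreover, the condition $t > -3$ is automatic from $t \geq -2$, so Theorem \ref{thm:stab_EN} applies and yields that $\ol{I} \subset \cH_2^{(t)}$ is $p$-essentially normal for every $p > 2$.

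As a sanity check, one can also route through Theorem \ref{thm:stab_EN2}: for $t \geq -2$ we have $t+1 \geq -1 \geq -2$, so Theorem \ref{thm:stab_div_quasi} supplies stable division in the $\cH_2^{(t+1)}$ norm, and again the condition $t > -3$ holds. Either route produces the same conclusion. There is no genuine obstacle here, since the theorems were tailored to cover exactly the range $t \geq -2$; the only thing worth stating carefully is the implicit passage from $I$ to the finite-codimensional subideal generated in a single quasi homogeneous degree (where the Groebner basis has uniform degree), but this passage is harmless because replacing $\ol{I}$ by a finite-codimensional closed submodule does not affect $p$-essential normality.
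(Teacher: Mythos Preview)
Your proposal is correct and follows exactly the same approach as the paper, which simply states that the corollary follows by combining Theorem~\ref{thm:stab_div_quasi} with either Theorem~\ref{thm:stab_EN} or Theorem~\ref{thm:stab_EN2}. Your additional remarks about the finite-codimensional subideal and the alternate route via Theorem~\ref{thm:stab_EN2} are accurate but unnecessary, since Theorem~\ref{thm:stab_div_quasi} already asserts stable division for $I$ itself in every $\cH_2^{(t)}$ norm with $t\geq -2$.
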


The above corollary was proved by Douglas and Sarkar by completely different methods, see \cite[Theorem 4.2]{DS11} (another proof appeared in \cite[Corollary 1.3]{GZ13}).
In fact, using Fang and Xia's result we may prove a stronger result, which applies to not necessarily quasi homogeneous ideals.
This has been observed in \cite[Corollary 3.1]{GZ13} for the case $t>-2$.

%%%%%%%%%%%%%%%%%%%%%%%%%%%%%%%%%%%%%%%%%%%%%%%%%
\begin{theorem}\label{thm:two_dim}
Let $I$ be an ideal in $\mb{C}[x,y]$.
Then for all $t \geq -2$ and all $p>2$, the submodule $\ol{I}$ of $\cH_2^{(t)}$ is $p$-essentially normal.
\end{theorem}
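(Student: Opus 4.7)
My plan is to mimic the proof of Theorem \ref{thm:stab_EN}: decompose $\overline{I}$ as an algebraic sum of closures of principal ideals, each of which is essentially normal by Fang--Xia, then apply Kennedy's sum theorem. By Hilbert's basis theorem, $I$ is finitely generated; fix generators $I = \langle f_1, \ldots, f_k \rangle$. Since $t \geq -2 > -3$, Theorem 1.1 of \cite{FX13} together with Theorem 1.2 of \cite{FX15} guarantee that every closure $\overline{\langle f_i \rangle}$ inside $\cH_2^{(t)}$ is $p$-essentially normal for all $p > 2$. By \cite[Theorem 3.3]{Ken15} (applicable in $\cH_2^{(t)}$ as in the proof of Theorem \ref{thm:stab_EN}), it then suffices to show that $\overline{I}$ coincides with the \emph{algebraic} sum $N := \overline{\langle f_1 \rangle} + \cdots + \overline{\langle f_k \rangle}$.

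The inclusion $N \subseteq \overline{I}$ is automatic, and $I \subseteq N$ is immediate, so everything reduces to showing that $N$ is closed in $\cH_2^{(t)}$. By Lemma \ref{lem:T} (together with Remark \ref{rem:notsquare}), this closedness is equivalent to the existence of a constant $C$ such that every $h \in N$ admits a decomposition $h = \sum_i h_i$ with $h_i \in \overline{\langle f_i \rangle}$ and $\sum \|h_i\| \leq C \|h\|$. A natural sufficient condition is that $I$ enjoy the approximate stable division property with respect to $\{f_1,\ldots,f_k\}$, in which case the conclusion follows from the implication (2)$\Rightarrow$(1) of Lemma \ref{lem:T} applied to $M_i = \langle f_i \rangle \subset \cH_2^{(t)}$. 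Thus the entire argument boils down to the assertion that \emph{every ideal in} $\mathbb{C}[x,y]$ \emph{has approximate stable division in the} $\cH_2^{(t)}$ \emph{norm}.

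This last step is the main obstacle; it is exactly the generalization of Theorem \ref{thm:stab_div_quasi} from the quasi homogeneous setting to the general case. The approach I would try is to pick a Groebner basis for $I$ with respect to a suitable monomial order (which always exists in two variables) and adapt the division-algorithm estimates of Lemma \ref{lem:stab_div_r}; the new difficulty is that lower-order terms of non-homogeneous $f_i$ prevent the clean factorial comparison used there, so a more careful bookkeeping is required. A potentially useful structural reduction: write $I = g \cdot J$, where $g = \gcd(f_1,\ldots,f_k)$ and $J$ is either the unit ideal --- in which case $I$ is principal and Fang--Xia applies directly --- or has finite codimension in $\mathbb{C}[x,y]$; in the second case one can hope to treat $\overline{J}$ as a finite-codimensional perturbation of $\cH_2^{(t)}$ and transfer stable division from $J$ to $gJ$ via boundedness of multiplication by $g$. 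As a complementary route, since Guo--Zhao \cite[Corollary 3.1]{GZ13} already handles the range $t > -2$, one may equivalently focus on the endpoint $t = -2$ (the two-variable Drury--Arveson space) and adapt their argument to this range using the endpoint techniques developed in \cite{FX15}, which is precisely the source that provides essential normality of principal ideals down to $t = -2$.
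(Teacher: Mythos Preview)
Your reduction to approximate stable division for an \emph{arbitrary} ideal in $\mathbb{C}[x,y]$ is a genuine gap: you do not prove it, and the paper does not claim it either (indeed, the paper stresses that the existence of an ideal without stable division is still open). The Groebner-basis estimate of Lemma~\ref{lem:stab_div_r} really uses quasi homogeneity in an essential way --- the factorial comparison collapses once lower-order terms of the $f_i$ can interact across degrees --- so ``more careful bookkeeping'' is not a substitute for an actual argument. Your alternative suggestion, to transfer stable division from the finite-codimensional ideal $J$ to $gJ$ via multiplication by $g$, runs into the same issue: you would first need stable division for $J$ itself, and a non-homogeneous $J$ of finite codimension is no easier than the general case.

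Ironically, the Beurling form $I=gJ$ that you mention in passing \emph{is} the whole proof, but used differently. The paper observes that since $J$ has finite codimension in $\mathbb{C}[x,y]$, the ideal $I=gJ$ has finite codimension in the principal ideal $\langle g\rangle$, hence $\overline{I}$ has finite codimension in $\overline{\langle g\rangle}$. By Fang--Xia (\cite[Theorem~1.1]{FX13} and \cite[Corollary~1.3]{FX15}), $\overline{\langle g\rangle}$ is $p$-essentially normal for $p>2$; and $p$-essential normality is trivially inherited by finite-codimensional closed submodules (the compressions differ by finite-rank operators). No stable division, no Kennedy decomposition theorem, and no case split on $t$ is needed.
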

\begin{proof}
Let $p = \operatorname{gcd}(I)$.
Then $I= p J$, where $J$ is a finite codimensional ideal in $\mb{C}[x,y]$ \cite[Lemma 2.2.9, p.28]{CG}.
Thus, $I$ is finite codimensional in $\langle p \rangle$, so $\ol{I}$ is finite codimensional in $\ol{\langle p \rangle }$.
By \cite[Theorem 1.1]{FX13} and \cite[Corollary 1.3]{FX15}, $\ol{\langle p \rangle}$ is $p$-essentially normal for all $p>2$, and from finite codimensionality it follows that $\ol{I}$ is, too.
\end{proof}

\begin{remark}
The factorization of $I$ as $pJ$ above is known as the {\em Beurling form} of $I$.
The idea of using the Beurling form to reduce the essential normality of an ideal to that of a principle ideal has appeared several places in the literature (for example in \cite{GW08,GZ13}) but we could not trace its precise origin.
Admittedly, the above proof is much shorter than the proof we provide for Corollary \ref{cor:ess_norm_quasi}.
On the other hand, it does require implicitly some results in algebraic geometry that are less intuitive than what is used in the stable division proof.
\end{remark}

Since the conjectures of Arveson and Douglas treat also submodules with multiplicity, it is natural to ask whether Lemma \ref{lem:stab_div_r} extends to vector valued modules.
The following example shows that the lemma fails in the vector valued case.
However, we will also show that the following example is {\em not} a counter example to the vector valued version of the first assertion of Theorem \ref{thm:stab_div_quasi}.

%%%%%%%%%%%%%%%%%%%%%%%%
\begin{example}
Consider the module generated by $f_1 = (x, 0, y)$, $f_2 = (0,x,y)$, and let $h = (xy^n, -xy^n, 0)$.
Then
\[
h = y^nf_1 - y^n f_2 ,
\]
and there is no other way to write $h$ as an element in the module spanned by $f_1$ and $f_2$.
Therefore the output of the division algorithm is $a_1 = -a_2 = y^n$.
However $\|h\|^2 \sim 1/n$, while $\|a_if_i\|^2 \sim 1$.
Thus, the conclusion of Lemma \ref{lem:stab_div_r} fails.

On the other hand, we now show that the basis $\{f_1 - f_2, f_2\}$
is a stable generating set for the module spanned by $f_1$ and $f_2$.
To see this, consider $h = pf_1 + q f_2 = (px, qx, (p+q)y)$, where $p,q \in \bC[x,y]$.
Then
\[
\|h\|^2 = \|px\|^2 + \|qx\|^2 + \|(p+q)y\|^2 .
\]
Write $h = p(f_1 - f_2) + (p+q)f_2$.
Then we compute the norm of each term:
\[
\|p(f_1 - f_2)\|^2 = \|(px, -px, 0)\|^2 = 2\|px\|^2 \leq 2 \|h\|^2 ,
\]
and
\begin{align*}
\|(p+q)f_2\| &= \|(0,(p+q)x,(p+q)y)\| \\
&\leq \|px\| + \|qx\| + \|(p+q)y\| \leq \sqrt{3} \|h\|.
\end{align*}
Thus $\{f_1 - f_2, f_2\}$ is a stable generating set.
\end{example}

We stress that it is still an open problem whether there exists an ideal that does not have the stable division property.
Examples as above show why it is so hard to settle the problem of whether or not every (homogeneous) ideal has the stable division property.
On the one hand, it is not too hard to cook up an ideal with a generating set which is not a stable generating set.
On the other hand, in all examples that we know, after making a few changes to the generating set it becomes a stable generating set.

%%%%%%%%%%%%%%%%%%%%%%%%
\section{Appendix}
%%%%%%%%%%%%%%%%%%%%%%%%

In the proof of Theorem \ref{thm:stab_EN2} we required the following elementary result, the proof of which we include for completeness.

%%%%%%%%%%%%%%%%%%%%%%%%%%%%%%%%%
\begin{lemma}\label{lem:gen} Let $M$ and $N$ be two linear subspaces in a Hilbert space $H$ such that $\overline{M}\cap N =\{0\}$.
Let $T$ be a linear operator defined on $M + N$ mapping to another Hilbert space, say $K$, and assume that
(i) $T$ is bounded on $M$,
(ii) $N$ is finite dimensional.
Then $T$ is bounded on $M+N$.
\end{lemma}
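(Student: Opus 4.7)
My plan is to exploit the unique decomposition $h = m + n$ with $m \in M$, $n \in N$ for $h \in M+N$, and show that the algebraic projections $h \mapsto m$ and $h \mapsto n$ are automatically bounded on $M+N$. Once this is established, boundedness of $T$ follows immediately from the triangle inequality together with (i) and the fact that any linear operator on a finite-dimensional space is bounded.

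First I would check that the decomposition is well defined. If $h = m + n = m' + n'$ with $m, m' \in M$ and $n, n' \in N$, then $m - m' = n' - n \in M \cap N \subseteq \overline{M} \cap N = \{0\}$, so $m = m'$ and $n = n'$. In particular, the restriction of $T$ to $N$ is well defined and, since $\dim N < \infty$, automatically bounded; call its norm $C_N$. Also denote by $C_M$ the operator norm of $T|_M$ from hypothesis (i).

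The main technical step is to show that $\overline{M} + N$ is closed in $H$ and that the associated algebraic projections are bounded. For this, let $\pi : H \to H/\overline{M}$ be the quotient map. Then $\pi(N)$ is a finite-dimensional, hence closed, subspace of the Banach space $H/\overline{M}$, and therefore its preimage $\pi^{-1}(\pi(N)) = \overline{M} + N$ is closed in $H$. Consider the Hilbert direct sum $\overline{M} \oplus N$ and the addition map
\begin{equation*}
S : \overline{M} \oplus N \longrightarrow \overline{M} + N, \qquad S(x,n) = x + n.
\end{equation*}
This $S$ is a bounded linear surjection, and it is injective because $\overline{M} \cap N = \{0\}$. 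By the open mapping theorem, $S^{-1}$ is bounded; hence there is a constant $C > 0$ such that whenever $h = x + n$ with $x \in \overline{M}$ and $n \in N$, we have $\|x\| \leq C \|h\|$ and $\|n\| \leq C \|h\|$.

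To finish, take any $h \in M + N$ and write $h = m + n$ with $m \in M \subseteq \overline{M}$ and $n \in N$. By the preceding bound, $\|m\| \leq C\|h\|$ and $\|n\| \leq C\|h\|$, and therefore
\begin{equation*}
\|Th\| \leq \|Tm\| + \|Tn\| \leq C_M \|m\| + C_N \|n\| \leq C(C_M + C_N)\|h\|,
\end{equation*}
showing that $T$ is bounded on $M + N$. The step I view as the true content is the closedness of $\overline{M} + N$ and the resulting boundedness of the (non-orthogonal) projections; the rest is a direct application of (i) and the finite-dimensionality of $N$.
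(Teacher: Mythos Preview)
Your proof is correct. It differs from the paper's approach in its main technical step: the paper proceeds by induction on $\dim N$, reducing to the case $N=\operatorname{span}\{v\}$, and then uses the Hilbert space inner product directly to show that $v$ makes a positive angle with $\overline{M}$ (namely $c=\sup\{|\langle v,x\rangle|:x\in M,\ \|x\|=1\}<1$), yielding the explicit lower bound $\|m+n\|^2\geq(1-c)(\|m\|^2+\|n\|^2)$. Your argument instead handles all finite-dimensional $N$ at once by observing that $\overline{M}+N$ is closed (via the quotient map to $H/\overline{M}$) and then invoking the open mapping theorem to bound the algebraic projections. Your route is cleaner and in fact works verbatim in any Banach space, whereas the paper's computation is more elementary, avoids the open mapping theorem, and produces an explicit constant in terms of the angle~$c$.
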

\begin{proof}
By induction, it suffices to prove for the case that $N$ is one dimensional, say $N = \operatorname{span} \{v\}$ where $v$ is a unit vector. We will show in fact that $T$ extends to a bounded operator from $\overline{M} + N$ into $K$.
Now, $T$ extends to a bounded operator on $\overline{M}$.
Moreover, $T$ is bounded on $N$ because $N$ is finite dimensional.
Denote $C = \max\{\|T\big|_N\|, \|T\big|_M\|\}$. Since $v \notin \overline{M}$, $v$ has a positive angle from $\overline{M}$ in the sense that $c:=\sup \{|\langle v, x \rangle | : x \in M, \|x\|=1\} < 1$.
It follows that for $m \in M$ and $n \in N$,
\[
\|m + n\|^2 \geq \|m\|^2 + \|n\|^2 - 2 c \|m\| \|n\| \geq (1-c)(\|m\|^2+ \|n\|^2) .
\]
Thus, for $m \in M$ and $n \in N$, we have
\begin{eqnarray*}\|T (m + n) \| &\leq & \|T m\| + \|T n\| \leq C (\|m\| + \|n\|)\\ &\leq & C \sqrt{2} (\|m\|^2 + \|n\|^2)^{1/2}\\ & \leq & C \sqrt{2} (1-c)^{-1/2} \|m + n\|.\end{eqnarray*}
\end{proof}

\subsection*{Acknowledgement} The authors are grateful to an anonymous referee for spotting a couple of mistakes in a previous version of this paper, and for several other helpful remarks that improved our presentation.
The authors also wish to thank Guy Salomon for providing useful feedback.

%%%%%%%%%%%%%%%%%%%%%%%%%%%%%%%%%%%%%%%%%%%%%%%%%%%%%%%%%%%%%%%%%%%%%%%%%%%%

\bibliographystyle{amsplain}

\end{document}